\documentclass[reqno]{amsart}
\usepackage{amsmath,amsfonts,amssymb}
\usepackage{color}

\begin{document}

\title[On Wigner's theorem]{On Wigner's theorem in smooth normed spaces}

\author{Dijana Ili\v{s}evi\'{c}}

\address{Department of Mathematics,
University of Zagreb, Bijeni\v{c}ka 30, P.O. Box 335, 10002 Zagreb,
Croatia}

\email{ilisevic@math.hr}

\author{Aleksej Turn\v{s}ek}

\address{Faculty  of Maritime Studies and Transport, University of Ljubljana, Pot pomor\-\v{s}\v{c}akov 4, 6320 Portoro\v{z}, Slovenia and Institute of Mathematics, Physics and Mechanics, Jadranska 19, 1000 Ljubljana, Slovenia}

\email{aleksej.turnsek@fmf.uni-lj.si}
\thanks{This research was supported in part by the Ministry of Science
and Education of Slovenia.}

%    General info
\subjclass[2010]{39B05, 46C50, 47J05}

%\date{January 1, 1994 and, in revised form, June 22, 1994.}

%\dedicatory{This paper is dedicated to our authors.}

\keywords{Wigner's theorem, isometry, normed space}

\begin{abstract}
In this note we generalize the well-known Wigner's unitary-anti\-unitary theorem.
For $X$ and $Y$ smooth normed spaces and $f:X\to Y$ a surjective mapping such that $|[f(x),f(y)]|=|[x,y]|$, $x,y\in X$, where $[\cdot,\cdot]$ is the unique semi-inner product, we show that $f$ is phase equivalent to either a linear or an anti-linear surjective isometry. When $X$ and $Y$ are smooth real normed spaces and $Y$ strictly convex, we show that Wigner's theorem is equivalent to $\{\|f(x)+f(y)\|,\|f(x)-f(y)\|\}=\{\|x+y\|,\|x-y\|\}$, $x,y\in X$.
\end{abstract}

\maketitle

\newtheorem{theorem}{Theorem}[section]
\newtheorem{proposition}[theorem]{Proposition}
\newtheorem{lemma}[theorem]{Lemma}
\newtheorem{corollary}[theorem]{Corollary}
\newtheorem{definition}{Definition}
\theoremstyle{definition}
\newtheorem{example}[theorem]{Example}
\newtheorem{xca}[theorem]{Exercise}
\newtheorem{question}{Question}

\theoremstyle{remark}
\newtheorem{remark}{Remark}[section]
%$|\!|\!|\cdot|\!|\!|$

\section{Introduction}

Let $(H,(\cdot,\cdot))$ and $(K,(\cdot,\cdot))$ be inner product spaces over $\mathbb F\in\{\mathbb R,\mathbb C\}$ and suppose that $f:H\to K$ is a mapping satisfying 
\begin{equation}| (f(x),f(y))|=|(x,y)|,\quad x,y\in H.\end{equation}
Then the famous Wigner's theorem says that $f$ is a solution of (1) if and only if it is phase equivalent to a linear or an anti-linear isometry, say $U$, that is, $$f(x)=\sigma(x) Ux,\quad x\in H,$$ 
where $\sigma: H\to\mathbb F$, $|\sigma(x)|=1$, $x\in H$, is a so called phase function. This celebrated result plays a very important role in quantum mechanics and in representation theory in physics.
There are several proofs of this result, see \cite{Bargmann, Freed, Geher, Gyory, Lomont, Ratz, Sharma1, Sharma3} to list just some of them. For generalizations to Hilbert $C^*$-modules see \cite{Bakic, Molnar}.

On each normed space $X$ over $\mathbb{F}$ there exists at least one  semi-inner product (s.i.p.), see \cite{Giles, Lumer}, on $X$ which is a function $[\, \cdot, \cdot \,] \colon X\times X\to\mathbb{F}$ with the following properties:
\begin{enumerate}
\item $[x+y,z]=[x,z]+[y,z]$, $[\lambda x,y]=\lambda[x,y]$, $[x,\lambda y]=\overline{\lambda}[x,y]$ for all $\lambda\in\mathbb{F}$ and $x,y \in X$,
\item $[x,x]>0$ when $x\ne0$,
\item $|[x,y]|\leq[x,x]^{1/2}[y,y]^{1/2}$ for all $x,y \in X,$
\end{enumerate}
and moreover, it is compatible with the norm in the sense that $[x,x]^{1/2}=\|x\|$.

Recall that $X$ is said to have a Gateaux differentiable norm at $x\ne0$ whenever  
$$\lim_{t\to0,t\in\mathbb{R}}\frac{\|x+ty\|-\|x\|}{t}$$
exists for all $y\in X$.

Remember also that a support functional $\phi_x$ at $x\in X$ is a
norm-one linear functional in $X^*$ such that $\phi_x(x) = \|x\|$. By
the Hahn--Banach theorem there always exists at least one such
functional for every $x\in X$.

A normed space $X$ is said to be smooth at $x$ if there
exists a unique support functional at $x$.
If $X$ is smooth at each one of its points then $X$ is said to be
smooth. It is well known, see for instance \cite[Theorem 1, p.~22]{Diestel}, that a Banach space $X$ is smooth at $x$ if and only if the norm is Gateaux differentiable at $x$. Moreover, in this case,  the real part $\text{Re}\,\phi_x$ of a unique support functional $\phi_x$ at $x$ is given by
\begin{equation}\label{smooth}
\text{Re}\,\phi_x(y)=\lim_{t\to0, t\in\mathbb R}\frac{\|x+ty\|-\|x\|}{t}.
\end{equation}

If $X$ is not smooth then there are many semi-inner products compatible with the norm. However, if $X$ is smooth then $[x,y]:=\|y\|\phi_y(x)$, where $\phi_y$ is the support functional at $y$, is the unique semi-inner product with $[x,x]^{1/2}=\|x\|$.

Now the following natural question arises: Let $X,Y$ be normed spaces and $f:X\to Y$ a mapping such that
\begin{equation}\label{normedWigner}
|[f(x),f(y)]|=|[x,y]|,\quad x,y\in X.
\end{equation}
Is it true that $f$ satisfies (\ref{normedWigner}) if and only if it is phase equivalent to either a linear or an anti-linear isometry? Let us first check that in general even not all linear isometries satisfy (\ref{normedWigner}).

\begin{example}
Let $T \colon (l_\infty^2,\mathbb R)\to(\l_\infty^2,\mathbb R)$ be defined by $T(x,y)=(y,x)$ and let the semi-inner product for $x=(x_1,x_2)$ and $y=(y_1,y_2)$ be defined by
$$[x,y]=\begin{cases}
x_1y_1&\text{if}\quad |y_1|>|y_2|\\
x_2y_2&\text{if}\quad|y_1|<|y_2|\\
\frac{3}{4}x_1y_1+\frac{1}{4}x_2y_2&\text{if}\quad |y_1|=|y_2|.
\end{cases}
$$
Then for $x=(1,0)$ and $y=(1,1)$ we get $[x,y]=\frac{3}{4}$ and $[Tx,Ty]=\frac{1}{4}$.
\end{example}

However, if $X$ and $Y$ are smooth normed spaces, then a mapping phase equivalent to a linear or an anti-linear isometry satisfies (\ref{normedWigner}). Indeed, if $U$ is a linear or an anti-linear isometry, then $\|Uy+tUx\|=\|y+tx\|$, $t\in\mathbb R$, hence by (\ref{smooth})
$$\text{Re}\,\phi_{Uy}(Ux)=\text{Re}\,\phi_y(x)$$
and then also $[Ux,Uy]=[x,y]$. From 
$$[f(x),f(y)]=[\sigma(x)Ux,\sigma(y)Uy]=\sigma(x)\overline{\sigma(y)}[Ux,Uy]$$
the claim follows. In our main result Theorem \ref{main} we show that the converse also holds.

%Also note that if $X$ is strictly convex then each point of $S_X$ is an exposed point of $B_X$, that is, if $x\in S_X$ and  $\phi_x\in S_{X^*}$ a support %functional at $x$, then $\varphi_x(y)<1$ if $y\ne x$ and $y\in S_X$. Indeed, if $\varphi_x(y)=1$ then from $\varphi_x(\lambda x+(1-\lambda)y)=1$ we %conclude that $\lambda x+(1-\lambda)y\in S_X$ for $\lambda\in[0,1]$ contradicting the strict convexity of $X$.

%Also note that if $X$ is strictly convex then $|[x,y]|=\|x\|\|y\|$ if and only if $x$ and $y$ are linearly dependent.

\section{Results}
Throughout, for a normed space $(X, \Vert  \cdot  \Vert)$, by $[\, \cdot, \cdot \,]$ we denote a semi-inner product satisfying $\Vert x \Vert = [x,x]^{1/2}$. We denote by $\mathbb PX=\{\langle x\rangle : x \in X\}$ the set of all one-dimensional subspaces of
a normed space $X$. If $M\subset X$ then $\langle M\rangle$ will denote the subspace generated by the set $M$. If $L\subseteq X$ is a two-dimensional subspace then $L=\langle L\rangle$ is called a projective line. Recall also that $A:X\to Y$ is semilinear if $A(x+y)=Ax+Ay$ and $A(\lambda x)=h(\lambda)Ax$, $x,y\in X$, $\lambda \in\mathbb{F}$, where $h:\mathbb{F}\to\mathbb{F}$ is a homomorphism. Next we state the fundametal theorem of projective geometry in the form in which it will be needed, see \cite[Theorem 3.1]{Faure}.  

\begin{theorem}[Fundamental theorem of projective geometry]\label{projective}
Let $X$ and $Y$ be vector spaces over $\mathbb{F}$ of dimensions at least three. Let $g: \mathbb PX\to\mathbb PY$ be a mapping such that
\begin{itemize}
\item[(i)] The image of $g$ is not contained in a projective line.
\item[(ii)] $0\ne c\in \langle a,b\rangle, a\ne 0\ne b,$ implies $g(\langle c\rangle)\in \langle g(\langle a\rangle), g(\langle b\rangle)\rangle$.
\end{itemize}
Then there exists an injective semilinear mapping $A:X\to Y$ such that
$$g(\langle x\rangle)=\langle Ax\rangle,\quad 0\ne x\in X.$$
Moreover, $A$ is unique up to a non-zero scalar factor.
\end{theorem}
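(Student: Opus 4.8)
The plan is to reconstruct the semilinear map $A$ directly from the incidence data encoded in $g$, following the classical scheme in which one first fixes a projective frame, then recovers the additive structure, and finally recovers the field homomorphism $h$ governing the action on scalars. First I would fix a basis $\{e_i\}_{i\in I}$ of $X$ and choose representatives $f_i\in Y$ with $g(\langle e_i\rangle)=\langle f_i\rangle$. Applying hypothesis (ii) to $e_i+e_j\in\langle e_i,e_j\rangle$ shows that $g(\langle e_i+e_j\rangle)$ lies on the line $\langle f_i,f_j\rangle$; using the freedom to rescale each $f_i$ I would normalize the frame so that $g(\langle e_i+e_j\rangle)=\langle f_i+f_j\rangle$ for every pair. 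Here the hypothesis $\dim X\ge 3$ is what makes the normalization globally consistent: with at least three independent directions available one can propagate the rescaling from a fixed base index and check that the choices agree, while (i) guarantees that the relevant images are genuinely in general position rather than collapsing onto a single line.

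Next I would recover the field homomorphism. Restricting $g$ to the projective line $\langle e_1,e_2\rangle$, every point other than $\langle e_2\rangle$ has the form $\langle e_1+\lambda e_2\rangle$, and (ii) forces its image onto $\langle f_1,f_2\rangle$, so there is a well-defined map $h\colon\mathbb F\to\mathbb F$ with $g(\langle e_1+\lambda e_2\rangle)=\langle f_1+h(\lambda)f_2\rangle$. The frame normalization gives $h(0)=0$ and $h(1)=1$, and the core of the argument is to show that $h$ is additive and multiplicative. For this I would realize $\lambda+\mu$ and $\lambda\mu$ by purely projective (collinearity) constructions inside a plane spanned by $e_1,e_2$ and an auxiliary independent vector, and then observe that, because $g$ sends collinear triples to collinear triples, these constructions are transported verbatim to the image side, yielding $h(\lambda+\mu)=h(\lambda)+h(\mu)$ and $h(\lambda\mu)=h(\lambda)h(\mu)$. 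A separate check, again using that $\dim X\ge 3$, shows that $h$ does not depend on the chosen line, so one and the same homomorphism governs every coordinate.

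With $h$ in hand I would define $A$ on $X$ by $A\bigl(\sum_i\lambda_i e_i\bigr)=\sum_i h(\lambda_i)f_i$ on finite sums and verify, coordinate by coordinate, that $\langle Ax\rangle=g(\langle x\rangle)$ for all $x\ne0$, that $A$ is additive, and that $A(\lambda x)=h(\lambda)Ax$. Injectivity is then automatic, since once $\langle Ax\rangle=g(\langle x\rangle)$ has been established the right-hand side is always a genuine point of $\mathbb P Y$, forcing $Ax\ne0$ whenever $x\ne0$. Finally, uniqueness up to a scalar is the standard argument: if $A'$ also induces $g$ then $A'x=c(x)Ax$ pointwise, and additivity applied to pairs of independent vectors, which exist abundantly since $\dim X\ge 3$, forces $c$ to be constant.

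I expect the main obstacle to be the middle step, namely proving that $h$ is a field homomorphism and, above all, that it is independent of all the auxiliary choices. This is exactly the point where one must convert incidence information into algebra: the projective constructions of $+$ and $\times$ are unambiguous only because Desargues' theorem holds in projective spaces of dimension at least three, so the hypothesis $\dim X\ge 3$ is not a convenience but the structural ingredient that makes the whole reconstruction well defined. Keeping track of the non-degeneracy needed for each construction, ensuring that the auxiliary points never accidentally coincide or become collinear, is the delicate bookkeeping at the heart of the proof.
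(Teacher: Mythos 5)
First, a point of context: the paper does not prove this theorem at all --- it is quoted, with citation, from Faure's article (\cite[Theorem 3.1]{Faure}), so there is no internal proof to compare yours against. Your proposal must therefore stand on its own, and as written it has a genuine gap.

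What you sketch is the classical von Staudt/coordinatization proof of the fundamental theorem for \emph{collineations}, i.e.\ for maps that are injective (or at least carry non-collinear triples to non-collinear triples). But the theorem stated here assumes neither: hypothesis (ii) is only one-directional collinearity preservation, and nothing in (i)--(ii) forbids $g$, a priori, from identifying points or flattening configurations. This is not ``delicate bookkeeping''; it is the mathematical content of the statement. Concretely: after you fix a basis $\{e_i\}$, nothing guarantees $\langle f_i\rangle\neq\langle f_j\rangle$. If $g(\langle e_1\rangle)=g(\langle e_2\rangle)=\langle f\rangle$, then (ii) applied to any $0\neq c\in\langle e_1,e_2\rangle$ gives $g(\langle c\rangle)\in\langle f\rangle$, so the entire projective line $\langle e_1,e_2\rangle$ collapses to one point and your map $h$ simply does not exist. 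Likewise, your normalization $g(\langle e_i+e_j\rangle)=\langle f_i+f_j\rangle$ presupposes $g(\langle e_i+e_j\rangle)\notin\{\langle f_i\rangle,\langle f_j\rangle\}$, and transporting the von Staudt addition and multiplication constructions through $g$ presupposes that the images of all auxiliary points and lines remain distinct and non-collinear. None of this is available from the hypotheses without argument; deriving exactly these non-degeneracy statements from (i), (ii) and $\dim\geq 3$ is what Faure's ``elementary proof'' actually does, and it is the part missing from your outline. Note also that Desargues' theorem is not the relevant ingredient you make it out to be: both $\mathbb PX$ and $\mathbb PY$ arise from vector spaces, hence are automatically Desarguesian; what Desargues cannot do is prevent $g$ itself from degenerating a configuration. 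Once the non-degeneracy lemmas are in place, your frame-normalization and coordinatization scheme can indeed be carried out (and your uniqueness argument is fine), so the skeleton is salvageable --- but as it stands it proves the classical injective version of the theorem, not the statement given.
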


In the proof of the next theorem we will also need the notion of orthogonality in normed spaces. Remember that $x\in X$ is Birkhoff-James orthogonal to $y\in X$, 
$$x\perp y\quad \text{if}\quad \|x+\lambda y\|\geq\|x\|\quad \text{for all }\lambda\in\mathbb{F}.$$

When $x\in X$ is a point of smoothness, then $x\perp y$
if and only if $y$ belongs to the kernel of the unique support
functional at~$x$, see \cite[Proposition 1.4.4.]{Fleming-Jamison}. 
Important consequence is that Birkhoff-James orthogonality is right additive in smooth spaces, that is, $x\perp y, x\perp z\Rightarrow x\perp y+z$.
Also note that in this case $x\perp y$ if and only if $[y,x]=0$.

\begin{theorem}\label{main}
Let $X$ and $Y$ be smooth normed spaces over $\mathbb{F}$ and suppose that $f \colon X\to Y$ is a surjective mapping satisfying 
$$|[f(x),f(y)]|=|[x,y]|,\quad x,y\in X.$$
\begin{itemize}
\item[(i)] If $\dim X\geq2$ and $\mathbb{F}=\mathbb{R}$, then $f$ is phase equivalent to a linear surjective isometry.
\item[(ii)] If $\dim X\geq2$ and $\mathbb{F}=\mathbb{C}$, then $f$ is phase equivalent to a linear or conjugate linear surjective isometry.
\end{itemize}
\end{theorem}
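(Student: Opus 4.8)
The plan is to manufacture a map on projective space and feed it to Theorem~\ref{projective}. Two consequences of the hypothesis are immediate and use no dimension assumption. Setting $y=x$ gives $\|f(x)\|^2=|[f(x),f(x)]|=|[x,x]|=\|x\|^2$, so $f$ preserves the norm and $f(0)=0$. Since in a smooth space $[u,v]=0$ if and only if $v\perp u$, the identity $|[f(x),f(y)]|=|[x,y]|$ forces $[f(x),f(y)]=0\Leftrightarrow[x,y]=0$; that is, $f$ preserves Birkhoff--James orthogonality in both directions, $u\perp v\Leftrightarrow f(u)\perp f(v)$.

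Next I would check that $\langle f(x)\rangle$ depends only on $\langle x\rangle$. Fix $x\ne0$, $\lambda\ne0$, and put $a=f(x)$, $b=f(\lambda x)$. From $|[\lambda x,z]|=|\lambda|\,|[x,z]|$ and surjectivity of $f$ one obtains $|\phi_w(b)|=|\lambda|\,|\phi_w(a)|$ for the support functional $\phi_w$ of every $w\in Y\setminus\{0\}$. Were $a,b$ independent, I would restrict to the two-dimensional (hence smooth) subspace $\langle a,b\rangle$, where, the unit ball being compact, \emph{every} norm-one functional is a support functional; a functional vanishing on $a$ but not on $b$ would then contradict the displayed relation. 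Thus $b\in\langle a\rangle$, so $g(\langle x\rangle):=\langle f(x)\rangle$ is a well-defined map $\mathbb{P}X\to\mathbb{P}Y$; running the same argument in $X$ shows $g$ is injective, and surjectivity of $f$ makes it onto, whence $\dim Y=\dim X\ge 3$ and hypothesis (i) of Theorem~\ref{projective} holds.

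The heart of the matter is hypothesis (ii): $c\in\langle a,b\rangle$ must give $f(c)\in\langle f(a),f(b)\rangle$. Here I would exploit right-additivity of orthogonality: for fixed $w$ the set $\{v:w\perp v\}=\ker\phi_w$ is a subspace, so $w\perp a$ and $w\perp b$ imply $w\perp v$ for every $v\in\langle a,b\rangle$. Transferring this through $f$ (writing any $w'\in Y$ as $f(w)$ by surjectivity and using orthogonality preservation) yields that every $w'$ with $w'\perp f(a)$ and $w'\perp f(b)$ also satisfies $w'\perp f(c)$. If $f(c)\notin\langle f(a),f(b)\rangle$, I would restrict to the finite-dimensional smooth subspace $\langle f(a),f(b),f(c)\rangle$, choose a norm-one functional there killing $f(a)$ and $f(b)$ but not $f(c)$, realize it as the support functional of some unit vector $w'$, and note that by smoothness the $Y$-support functional of $w'$ restricts to it; then $w'\perp f(a),f(b)$ while $w'\not\perp f(c)$, a contradiction. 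Hence (ii) holds and Theorem~\ref{projective} supplies an injective semilinear $A\colon X\to Y$ with $\langle Ax\rangle=\langle f(x)\rangle$ and associated homomorphism $h$.

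Writing $f(x)=\sigma(x)Ax$ with $\sigma(x)\ne0$ and substituting into the hypothesis gives $|\sigma(x)|\,|\sigma(y)|\,|[Ax,Ay]|=|[x,y]|$ together with $|\sigma(x)|\,\|Ax\|=\|x\|$, whence $A$ preserves the normalized quantity $|[x,y]|/(\|x\|\,\|y\|)$ and, in particular, Birkhoff--James orthogonality. I would then show that such a semilinear bijection between smooth spaces is a scalar multiple of an isometry, rescale $A$ to an isometry $U$ (which forces $|\sigma(x)|=\|x\|/\|Ux\|=1$, so $\sigma$ is a genuine phase function), and conclude $f=\sigma\cdot U$; for $\mathbb{F}=\mathbb{R}$ the only field endomorphism is the identity, so $U$ is linear, while for $\mathbb{F}=\mathbb{C}$ the isometry of the semilinear $U$ gives $|h(\mu)|=|\mu|$, which makes $h$ continuous and hence the identity or conjugation. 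The two places where I expect genuine work are exactly this last upgrade — proving that the orthogonality preserver $A$ is a multiple of an isometry, for which the boundedness of $A$ must first be secured — and the excluded case $\dim X=2$, which falls outside Theorem~\ref{projective} and must be treated by a separate, direct analysis of orthogonality in the plane.
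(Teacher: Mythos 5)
Your skeleton is the same as the paper's --- build $g\colon\mathbb{P}X\to\mathbb{P}Y$, verify the two hypotheses of Theorem~\ref{projective}, then upgrade the resulting semilinear $A$ --- and the parts you argue differently are correct and arguably cleaner: the paper proves the pointwise identities $f(\lambda x)=\gamma f(x)$ and $f(x+y)=\alpha f(x)+\beta f(y)$ by a minimization-plus-right-additivity argument and reads both hypotheses off of them, whereas you get well-definedness of $g$ and hypothesis (ii) from Hahn--Banach extension, norm-attainment in a finite-dimensional subspace, and uniqueness of the support functional in the smooth space $Y$; that works. But your derivation of hypothesis (i) fails as stated: from bijectivity of $g$ you conclude ``$\dim Y=\dim X\ge3$,'' and a set-theoretic bijection between projective spaces carries no dimensional information (over $\mathbb{R}$ or $\mathbb{C}$ all projective spaces of dimension at least one have the same cardinality), so nothing here rules out $\dim Y\le2$. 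The repair is cheap with tools you already have: as in the paper, take unit vectors $x$, $y\in\ker\phi_x$, $z\in\ker\phi_x\cap\ker\phi_y$; mutual Birkhoff--James orthogonality makes $x,y,z$ linearly independent, hence $f(x),f(y),f(z)$ are mutually orthogonal and linearly independent, so the image of $g$ is not contained in a projective line.

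The two places you yourself flag as ``genuine work'' are genuine gaps, and they are exactly where the paper's effort lies. First, the endgame: you never prove that $A$ can be rescaled to an isometry. The paper gets this from $f(x+y)=\alpha f(x)+\beta f(y)$ with $|\alpha|=|\beta|=1$ (proved using the auxiliary vector $w=x+\lambda_0y$, $\lambda_0$ minimizing $\|x+\lambda y\|$, so that $w\perp y$ but $w\not\perp x$), which forces all scalars $\lambda(z)$ in $f(z)=\lambda(z)Az$ to share one modulus; having bypassed these identities, you must either prove them now or prove your claimed lemma that a semilinear bijection preserving $|[x,y]|/(\|x\|\,\|y\|)$ is a scalar multiple of an isometry. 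The latter is not an off-the-shelf fact: over $\mathbb{C}$ the homomorphism $h$ attached to $A$ could at this stage still be a wild, discontinuous automorphism (theorems on orthogonality-preserving maps, e.g.\ of Blanco--Turn\v{s}ek type, concern linear maps), and your plan to extract $|h(\mu)|=|\mu|$ ``from the isometry of $U$'' is circular, since $U$ exists only after the rescaling you are trying to justify. Second, the case $\dim X=2$ is simply absent; in the paper it occupies roughly half the proof (a bare-hands construction of $h$, a case analysis showing $h(\eta)=h(1)\eta$ or $h(\eta)=h(1)\overline{\eta}$, and the assembly of a linear or conjugate-linear isometry together with a phase function). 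As it stands, your proposal proves the theorem only for $\dim X\ge3$ and modulo an unproved rescaling lemma.
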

\begin{proof}
Let $\lambda\in\mathbb{F}$ and $x\in X$. We will show that $f(\lambda x)=\gamma f(x)$, where $\gamma=\gamma(\lambda,x)$ depends on $\lambda$ and on $x$, and $|\gamma|=|\lambda|$. The function  
\begin{equation}\label{min}
\xi\mapsto\|f(\lambda x)-\xi f(x)\|
\end{equation} 
is continuous and tends to infinity when $|\xi|$ tends to infinity. Hence there is at least one point, say $\gamma$, such that the function in (\ref{min})  achieves its global minimum.  Thus
$$\min_{\xi\in\mathbb{F}}\|f(\lambda x)-\xi f(x)\|=\|f(\lambda x)-\gamma f(x)\|.$$
Note that  
$$\|f(\lambda x)-\gamma f(x)+\mu f(x)\|\geq\|f(\lambda x)-\gamma f(x)\|$$
for all $\mu\in\mathbb{F}$, hence $f(\lambda x)-\gamma f(x)\perp f(x)$. Since $f$ is surjective, there is $z\in X$ such that $f(z)=f(\lambda x)-\gamma f(x)$. Then from $f(z)\perp f(x)$ we get $z\perp x$, and then $z\perp \lambda x$ and $f(z)\perp f(\lambda x)$. Since $Y$ is smooth, Birkhoff-James orthogonality is right additive, so from $f(z)\perp f(\lambda x)$ and $f(z)\perp f(x)$ we conclude $f(z)\perp f(\lambda x)-\gamma f(x)=f(z)$. Thus $f(z)=0$ and we have $f(\lambda x)=\gamma f(x)$. 
Furthermore,
$$\vert \lambda \vert \Vert x \Vert = \|\lambda x\|=\|f(\lambda x)\|=\|\gamma f(x)\|=\vert \gamma \vert \Vert f(x) \Vert = \vert \gamma \vert \Vert x \Vert,$$
which implies $|\gamma|=|\lambda|$. 

Next, let $x,y\in X$ be linearly independent. We will show that
$f(x+y)=\alpha f(x)+\beta f(y)$, where $\alpha=\alpha(x,y)$, $\beta=\beta(x,y)$, and $|\alpha|=|\beta|=1$. Analogously  as before we obtain $\alpha, \beta\in\mathbb F$ such that
$$\min_{\xi,\eta\in\mathbb F}\|f(x+y)-\xi f(x)-\eta f(y)\|=\|f(x+y)-\alpha f(x)-\beta f(y)\|.$$
Furthermore, it is easy to see that 
$$f(x+y)-\alpha f(x)-\beta f(y)\perp f(x)\quad\text{and}\quad f(x+y)-\alpha f(x)-\beta f(y)\perp f(y).$$
Take $z\in X$ such that $f(z)=f(x+y)-\alpha f(x)-\beta f(y)$. Then $f(z)\perp f(x)$ implies $z\perp x$, $f(z)\perp f(y)$ implies $z\perp y$ and smoothness of $X$ implies $z\perp x+y$ and then $f(z)\perp f(x+y)$. Hence $f(z)\perp f(z)$ and $f(z)=0$. Let us show that $|\alpha|=1$. Let $\min_\lambda\|x+\lambda y\|=\|x+\lambda_0y\|$. Then $x+\lambda_0y\perp y$ and $x+\lambda_0y\not\perp x$. Indeed, suppose that $x+\lambda_0y\perp x$. Then by the right additivity we get $x+\lambda_0y\perp x+\lambda_0y$. This would mean that $x+\lambda_0y=0$, a contradiction because $x$ and $y$ are linearly independent. 
Denote $w=x+\lambda_0y$. 
	Since $w \perp y$ we also have $f(w) \perp f(y)$.
	Then
	\begin{eqnarray}
	[f(x+y), f(w)]&=&\alpha[f(x), f(w)]+\beta[f(y), f(w)]\nonumber \\
	&=&\alpha[f(x), f(w)],\nonumber
	\end{eqnarray}
	which implies
	\begin{eqnarray}
	\vert \alpha \vert \vert[x, w]\vert &=&\vert \alpha \vert \vert[f(x), f(w)]\vert=\vert[f(x+y), f(w)]\vert\nonumber \\
	&=&\vert[x+y, w]\vert = \vert[x, w]\vert,\nonumber
	\end{eqnarray}
hence $|\alpha|=1$. Similarly we get $|\beta|=1$.

Let us prove that $f$ induces a surjective mapping $\tilde{f} \colon \mathbb{P}X\to\mathbb{P}Y$ defined by $\tilde{f}(\langle x\rangle)=\langle f(x)\rangle$.  Suppose $\langle x\rangle=\langle y\rangle$, that is $y=\lambda x$. Then $f(y)=f(\lambda x)=\gamma f(x)$ for some $\gamma\in\mathbb F$ and then $\langle f(y)\rangle=\langle f(x)\rangle$. So $\tilde{f}$ is well defined and  surjective because $f$ is surjective.

Now suppose that $\dim X\geq3$ and let $x\in X$ be a unit vector. Choose a unit vector $y\in\ker\phi_x$, where $\phi_x$ is the support functional at $x$, and then choose a unit vector $z\in\ker\phi_x\cap\ker\phi_y$, where $\phi_y$ is the support functional at $y$. Then from $x\perp y$, $x\perp z$ and $y\perp z$ follows that $x,y,z$ are linearly independent. Indeed, $y$ and $z$ are linearly independent because $y\perp z$. From $x\perp y$ and $x\perp z$ it follows, using homogeneity and right additivity of Birkhoff-James orthogonality, that $x\perp\langle y,z\rangle$, hence $x,y,z$ are linearly independent. Now $f(x), f(y), f(z)$ are unit vectors such that $f(x)\perp f(y)$, $f(x)\perp f(z)$ and $f(y)\perp f(z)$. As before we conclude that $f(x), f(y)$ and $f(z)$ are linearly independent. So the image of $f$ is not contained in a two-dimensional subspace, thus the image of $\tilde{f}$ is not contained in a projective line. This shows that  $\tilde{f}$ satisfies condition (i) of Theorem \ref{projective}. Furthermore, from $f(\lambda x)=\gamma f(x)$ and $f(x+y)=\alpha f(x)+\beta f(y)$ it follows that condition (ii) of Theorem \ref{projective} is also satisfied.

Thus by Theorem \ref{projective} we conclude that $\tilde{f}$ is induced by a bijective semilinear mapping $A:X\to Y$, that is, 
$$\tilde{f}(\langle x\rangle)=\langle Ax\rangle,\quad x\in X.$$
Fix a nonzero $x\in X$. Then $f(x)=\lambda Ax$ for some nonzero $\lambda\in\mathbb{F}$. Let $y\in X$ be such that $x$ and $y$ are linearly independent. Then $f(y)=\mu Ay$ and $f(x+y)=\nu A(x+y)$. Note also that $Ax$ and $Ay$ are linearly independent since $A$ is semilinear and bijective.
Thus from $f(x+y)=\alpha f(x)+\beta f(y)=\alpha\lambda Ax+\beta\mu Ay$ we get
$\alpha\lambda=\nu$ and $\beta\mu=\nu$. Since $|\alpha|=|\beta|=1$ we get $|\lambda|=|\mu|=|\nu|$. 
Hence $f(z)=\lambda(z)Az$ with $\vert \lambda(z) \vert=\vert \lambda \vert$ for all $z\in X$.
Let $U=\lambda A$ and $\sigma(z)=\lambda(z)/\lambda$ for every $z \in X$.
	Then $\sigma \colon X \to \mathbb{F}$ is a phase function and
	$$f(z)=\lambda(z)Az=\sigma(z)Uz, \quad z \in X.$$
If $\mathbb{F}=\mathbb{R}$ then $A$ (hence also $U$) is linear, because any nontrivial homomorphism $h \colon \mathbb{R}\to\mathbb{R}$ is identity. Suppose $\mathbb{F}=\mathbb{C}$.
Let $\xi\in\mathbb{C}$. Then
$$f(\xi z)=\lambda(\xi z)A(\xi z)=\lambda(\xi z)h(\xi)Az,$$
and on the other hand $f(\xi z)=\xi'f(z)=\xi'\lambda(z)Az$. Because $|\lambda(\xi z)|=|\lambda(z)|$ and $|\xi'|=|\xi|$ we get $|h(\xi)|=|\xi|$. 
Then $h$ is continuous at zero, hence continuous everywhere.
A continuous homomorphism $h \colon \mathbb{C}\to\mathbb{C}$ is either identity or conjugation.
Therefore $A$, and also $U$, is linear or conjugate linear.
It is now clear that $U$ is an isometry. It is surjective because $f$ is surjective.
This completes the proof.

Let us now suppose that $\dim{X}=2$.
	Let us fix linearly independent $x_0, y_0 \in X$.
	Let $A(x_0)=f(x_0)$.
	For every $\mu \in \mathbb{F}$ there exist $\omega_1, \omega_2 \in \mathbb{F}$ such that 
	$f(x_0+\mu y_0)=\omega_1 f(x_0)+\omega_2 f(y_0)$, 
	with $\vert \omega_1 \vert =1$, $\vert \omega_2 \vert = \vert \mu \vert$.
	Let $h(\mu)=\omega_2 / \omega_1$ and $A(\mu y_0) = h(\mu)f(y_0)$.
	Note that $\vert h(\mu) \vert = \vert \mu \vert$.
	Furthermore, let us define $A(x_0+\mu y_0)=A(x_0)+A(\mu y_0)$.
	For $\lambda, \mu \in \mathbb{F}$,
	$$f(x_0+(\lambda+\mu)y_0)=\omega_1 f(x_0) + \omega_1 h(\lambda+\mu)f(y_0),$$
	and also
	\begin{eqnarray}
	f(x_0+(\lambda+\mu)y_0) &=& f((x_0+\lambda y_0)+\mu y_0) = \omega_2 f(x_0+\lambda y_0) + \omega_3 f(y_0) \nonumber \\
	&=& \omega_4 f(x_0)+ \omega_4 h(\lambda)f(y_0)+\omega_3f(y_0). \nonumber
	\end{eqnarray}
	Since $f(x_0)$ and $f(y_0)$ are also linearly independent, $\omega_4=\omega_1$ and $\omega_4 h(\lambda)+\omega_3 = \omega_1 h(\lambda + \mu)$, with $\vert \omega_1 \vert = 1$ and $\vert \omega_3 \vert = \vert \mu \vert$.
	Then
\begin{eqnarray}\label{0A}
h(\lambda + \mu)=h(\lambda) + \frac{\omega_3}{\omega_1},
\end{eqnarray}
	which implies
	$$\vert \lambda + \mu \vert = \vert h(\lambda+\mu) \vert = \Big{\vert} h(\lambda) + \frac{\omega_3}{\omega_1} \Big{\vert}$$
	with $\vert h(\lambda) \vert = \vert \lambda \vert$ and $\vert \omega_3 / \omega_1 \vert = \vert \mu \vert$.
	This yields
	$$\Big{\vert} \frac{\lambda}{\mu}+1  \Big{\vert} =  \Big{\vert} h(\lambda) \frac{\omega_1}{\omega_3}+1  \Big{\vert}$$
	with $\vert \frac{\lambda}{\mu} \vert = \vert h(\lambda) \frac{\omega_1}{\omega_3} \vert$.
	It can be easily verified that 
	$$\frac{\lambda}{\mu} =  h(\lambda) \frac{\omega_1}{\omega_3} \quad \textup{or} \quad \frac{\overline{\lambda}}{\overline{\mu}} =  h(\lambda) \frac{\omega_1}{\omega_3},$$ that is,
	\begin{eqnarray}\label{1A}
	\frac{\omega_3}{\omega_1}=h(\lambda)\frac{\mu}{\lambda}
	\end{eqnarray}
	or
	\begin{eqnarray}\label{2A}
	\frac{\omega_3}{\omega_1}=h(\lambda)\frac{\overline{\mu}}{\overline{\lambda}}.
	\end{eqnarray}
Let us fix $\eta \in \mathbb{F}$.
If \eqref{1A} holds for $\lambda=1$ and $\mu=\eta-1$ then \eqref{0A} implies
$$h(\eta)=h(1)\eta.$$
If \eqref{2A} holds for $\lambda=1$ and $\mu=\eta-1$ then \eqref{0A} implies
$$h(\eta)=h(1)\overline{\eta}.$$
If $\mathbb{F}=\mathbb{R}$ we are done. Suppose that $\mathbb{F} = \mathbb{C}$.
Note that \eqref{0A} becomes
$$h(\lambda+\mu)=h(\lambda)+h(\lambda)\frac{\mu}{\lambda},$$
or
$$h(\lambda+\mu)=h(\lambda)+h(\lambda)\frac{\overline{\mu}}{\overline{\lambda}}.$$
If for some $\lambda \in \mathbb{F} \setminus \mathbb{R}$ we have $h(\lambda)=h(1)\lambda$ and for some $\mu \in \mathbb{F} \setminus \mathbb{R}$ we have $h(\mu)=h(1)\overline{\mu}$ then 
$$h(\mu)=h(\lambda+(\mu-\lambda))=h(\lambda)+h(\lambda)\frac{\mu-\lambda}{\lambda}=h(1)\mu$$
or
$$h(\mu)=h(\lambda+(\mu-\lambda))=h(\lambda)+h(\lambda)\frac{\overline{\mu}-\overline{\lambda}}{\overline{\lambda}}=h(1)\frac{\lambda}{\overline{\lambda}}\mu.$$
In both cases we arrive at a contradiction with $\lambda, \mu \notin \mathbb{R}$.
Hence $h(\lambda)=h(1)\lambda$ for every $\lambda \in \mathbb{R}$ or  $h(\lambda)=h(1)\overline{\lambda}$ for every $\lambda \in \mathbb{R}$.
Let $k=h(1)$ and let $A(y_0)=kf(y_0)$.
Then $A(\mu y_0)=\mu A(y_0)$ or $\overline{\mu}A(y_0)$, and $A(x_0+\mu y_0)=A(x_0)+\mu A(y_0)$ or $A(x_0+\mu y_0)=A(x_0)+\overline{\mu}A(y_0)$, respectively.
In the first case we extend $A$ to $X$ by $A(\lambda x_0+\mu y_0)= \lambda A(x_0+\frac{\mu}{\lambda}y_0)$, and in the second case by $\overline{\lambda} A(x_0+\frac{\mu}{\lambda}y_0)$.
Such $A$ is linear or conjugate linear.
From 
\begin{eqnarray}
\Vert \lambda x_0 + \mu y_0 \Vert &=& \vert \lambda \vert \, \Vert f(x_0+\frac{\mu}{\lambda} y_0) \Vert = \vert \lambda \vert \, \Vert f(x_0)+h(\frac{\mu}{\lambda})f(y_0) \Vert \nonumber \\
&=& \vert \lambda \vert \, \Vert A(x_0+\frac{\mu}{\lambda} y_0) \Vert = \Vert A(\lambda x_0+\mu y_0) \Vert, \nonumber 
\end{eqnarray}
we conclude that $A$ is an isometry.
Finally,
\begin{eqnarray}
f(\lambda x_0 + \mu y_0) &=&\lambda' f(x_0+\frac{\mu}{\lambda} y_0) \nonumber \\
&=& \lambda' (\omega f(x_0) + \omega h(\frac{\mu}{\lambda}) f(y_0)) = \omega\frac{\lambda'}{\lambda} A(\lambda x_0 + \mu y_0) \nonumber 
\end{eqnarray}
for some $\omega, \lambda' \in \mathbb{F}$ such that $\vert \omega \vert = 1$, $\vert \lambda' \vert = \vert \lambda \vert$.
It remains to define $\sigma(\lambda x_0 + \mu y_0) = \omega \frac{\lambda'}{\lambda}$.
\end{proof}

\begin{remark}
If $X$ is one-dimensional then $X$ is obviously smooth. Suppose that $Y$ is a smooth normed space and $f:X\to Y$ a mapping such that $|[f(x),f(y)]|=|[x,y]|$, $x,y\in X$. Let $\lambda\in\mathbb F$ and fix a unit vector $x\in X$. Analogously as in Theorem \ref{main}, we obtain $f(\lambda x)=\gamma f(x)$ for some $\gamma\in\mathbb F$, which depends on $\lambda$, and $|\gamma|=|\lambda|$. Now for $z=\lambda x$ define  phase function $\sigma(z)=\gamma/\lambda$ and define a linear surjective isometry $U:X\to Y$ by $Uz=\lambda f(x)$. Then $f=\sigma U$ and we conclude that $f$ is phase equivalent to a linear surjective isometry.
\end{remark}

Maksa and P\'{a}les, see \cite{Maksa}, showed that for a mapping $f:H\to K$, where $H$ and $K$ are real inner product spaces, Wigner's theorem is equivalent to the requirement that $f$ satisfies the following condition:
\begin{equation}\label{phaseisometry}
\{\|f(x)+f(y)\|,\|f(x)-f(y)\|\}=\{\|x+y\|,\|x-y\|\},\quad x,y\in H.
\end{equation}
They asked for possible generalizations in the setting of real normed spaces, that is, if $X$ and $Y$ are real normed spaces and $f:X\to Y$ a mapping, is it true that $f$ satisfies (\ref{phaseisometry}) if and only if $f$ is phase equivalent to a linear isometry?

Recall that a normed space $X$ is said to be  strictly convex whenever the unit sphere $S_X$ contains no non-trivial line segments, that is, each point of $S_X$ is an extreme point of a unit ball $B_X$.

The following proposition generalizes \cite[Theorem 2 (i) $\Leftrightarrow$ (iv) $\Leftrightarrow$ (v)]{Maksa}.

\begin{proposition}
Let $X$, $Y$ be real smooth normed spaces, $Y$ strictly convex, $f:X\to Y$ surjective. The following assertions are equivalent:
\begin{itemize}
\item[(i)] $|[f(x),f(y)]|=|[x,y]|$, $x,y\in X$.
\item[(ii)] $f$ is phase equivalent to a linear surjective isometry.
\item[(iii)] $\{\|f(x)+f(y)\|,\|f(x)-f(y)\|\}=\{\|x+y\|,\|x-y\|\}$, $x,y\in X$.
\end{itemize}
\end{proposition}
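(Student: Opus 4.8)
The plan is to establish the cycle (ii) $\Rightarrow$ (iii) $\Rightarrow$ (i) $\Rightarrow$ (ii). The equivalence (i) $\Leftrightarrow$ (ii) is already available: for $\dim X\geq2$ the implication (i) $\Rightarrow$ (ii) is Theorem \ref{main}(i), for $\dim X=1$ it is the remark following that theorem, and the reverse implication (ii) $\Rightarrow$ (i) is the computation carried out in the introduction. Hence the genuinely new content is the pair of implications (ii) $\Rightarrow$ (iii) and (iii) $\Rightarrow$ (i), and proving these two closes the loop.

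For (ii) $\Rightarrow$ (iii) I would write $f=\sigma U$ with $U$ a linear surjective isometry and $\sigma(x)\in\{-1,1\}$ (as $\mathbb{F}=\mathbb{R}$). Then $f(x)\pm f(y)=\sigma(x)Ux\pm\sigma(y)Uy$, and splitting into the cases $\sigma(x)=\sigma(y)$ and $\sigma(x)=-\sigma(y)$ shows that $\{\|f(x)+f(y)\|,\|f(x)-f(y)\|\}$ equals $\{\|U(x+y)\|,\|U(x-y)\|\}=\{\|x+y\|,\|x-y\|\}$ in both cases. This direction is a direct computation and uses neither smoothness nor strict convexity.

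For (iii) $\Rightarrow$ (i) I would first note that taking $y=0$, and then $x=y=0$, in (iii) forces $f(0)=0$ and $\|f(x)\|=\|x\|$ for every $x$. The crucial preliminary step is to upgrade this to homogeneity up to sign. For $x\neq0$ and $t>0$, applying (iii) to the pair $(x,tx)$ gives $\{\|f(x)+f(tx)\|,\|f(x)-f(tx)\|\}=\{(1+t)\|x\|,|1-t|\|x\|\}$; since the larger value $(1+t)\|x\|$ saturates the triangle inequality $\|f(x)\pm f(tx)\|\leq\|f(x)\|+\|f(tx)\|$, strict convexity of $Y$ forces $f(x)$ and one of $\pm f(tx)$ to be positively proportional, and comparing norms yields $f(tx)=\pm t f(x)$, equivalently $tf(x)=\pm f(tx)$.

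To finish I would use that in the real smooth setting \eqref{smooth} reads $\phi_y(x)=\lim_{t\to0}(\|y+tx\|-\|y\|)/t$ and that $[x,y]=\|y\|\phi_y(x)$. Fix $x,y\neq0$ (the cases $x=0$ or $y=0$ being trivial). For $t>0$ we have $tf(x)=\pm f(tx)$, so $\|f(y)+tf(x)\|$ coincides with one of $\|f(y)\pm f(tx)\|$, and by (iii) applied to $(y,tx)$ it therefore lies in $\{\|y+tx\|,\|y-tx\|\}$. Using $\|f(y)\|=\|y\|$, the difference quotient $(\|f(y)+tf(x)\|-\|f(y)\|)/t$ thus belongs, for each $t>0$, to the two-element set with entries $(\|y\pm tx\|-\|y\|)/t$, which converge respectively to $\phi_y(x)$ and $-\phi_y(x)$ as $t\to0^+$ by smoothness of $X$. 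On the other hand smoothness of $Y$ at $f(y)$ forces this same quotient to converge to $\phi_{f(y)}(f(x))$. A convergent quantity whose values lie in $\{a_t,b_t\}$ with $a_t\to\phi_y(x)$ and $b_t\to-\phi_y(x)$ must have its limit in $\{\phi_y(x),-\phi_y(x)\}$ when $\phi_y(x)\neq0$ (and the limit is $0$ by squeezing when $\phi_y(x)=0$), so $|\phi_{f(y)}(f(x))|=|\phi_y(x)|$; multiplying by $\|f(y)\|=\|y\|$ gives $|[f(x),f(y)]|=|[x,y]|$. I expect the main obstacle to be exactly this last argument: extracting homogeneity up to sign from strict convexity and then pinning the one-sided limit to $\pm\phi_y(x)$.
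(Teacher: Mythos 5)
Your proof is correct and follows essentially the same route as the paper: both reduce the new content to (iii) $\Rightarrow$ (i), proved via norm preservation from degenerate pairs, strict convexity turning the saturated triangle inequality into $f(tx)=\pm tf(x)$, and difference quotients identified through smoothness with $\pm\phi_y(x)$. The only cosmetic differences are that the paper obtains sign-homogeneity just for $t=2^m$ by iterated doubling and passes to the limit along $1/2^m$, whereas you obtain it for all $t>0$ and take a continuous limit $t\to0^+$, spelling out the limit-identification step (including the case $\phi_y(x)=0$) that the paper leaves implicit.
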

\begin{proof}
(i) $\Rightarrow$ (ii) is Theorem \ref{main}, and (ii) $\Rightarrow$ (iii) is obvious. It remains to prove (iii) $\Rightarrow$ (i). Let $x=y$. Then from $\{2\|f(x)\|,0\}=\{2\|x\|,0\}$ we get $\|f(x)\|=\|x\|$, $x\in X$. Insert $2x$ and $x$ in (iii) to get
$$\{\|f(2x)+f(x)\|,\|f(2x)-f(x)\|\}=\{3\|x\|,\|x\|\},\quad x\in X.$$
Hence for $x\in X$ either $\|f(2x)+f(x)\|=3\|x\|$ or $\|f(2x)+f(x)\|=\|x\|$. 

Suppose that $x\in X$ is such that $\|f(2x)+f(x)\|=3\|x\|$. Then from
$$3\|x\|=\|f(2x)+f(x)\|\leq\|f(2x)\|+\|f(x)\|=3\|x\|$$
and strict convexity of $Y$ we get $f(2x)=2f(x)$. If $\|f(2x)-f(x)\|=3\|x\|$, then, analogously, we get $f(2x)=-2f(x)$. Therefore $f(2x)=\pm 2f(x)$, $x\in X$.  Let $n=2^m$. Then from $f(nx)=\pm nf(x)$, $x\in X$,  we have
\begin{eqnarray*}
n(\|f(x)+\tfrac{1}{n}f(y)\|-\|f(x)\|)= \|\pm f(nx)+f(y)\|-n\|f(x)\|\\
=\|nx\pm y\|-n\|x\|= n(\|x\pm\tfrac{1}{n}y\|-\|x\|), \quad y\in X.
\end{eqnarray*}
Thus $|[f(y),f(x)]|=|[y,x]|$, $x, y\in X$ and the proof is completed.
\end{proof}

In the last part of the paper we consider mappings $f:X\to Y$ satisfying 
\begin{equation}
[f(x),f(y)]=[x,y],\quad x,y\in X.
\end{equation}
Namely, it is easy to see that in the setting of inner product spaces any such mapping is necessarily a linear isometry.

\begin{proposition}\label{isometry}
Let $X$ and $Y$ be  normed spaces and $f \colon X\to Y$ a mapping such that $[f(x),f(y)]=[x,y]$, $x,y\in X$.
\begin{itemize}
\item[(i)] If $f$ is surjective then $f$ is a linear isometry.
\item[(ii)] If $X=Y$ is smooth Banach space then $f$ is a linear isometry.
\end{itemize}
\end{proposition}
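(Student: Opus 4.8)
The plan is to treat both parts by first extracting everything the identity $[f(x),f(y)]=[x,y]$ yields through the \emph{first} slot, where the semi-inner product is additive and homogeneous, and only afterwards to worry about removing the resulting defect. Putting $y=x$ gives $\|f(x)\|^2=[f(x),f(x)]=[x,x]=\|x\|^2$, so $f$ is norm preserving and $f(0)=0$; and if $f(x)=f(x')$ then $[x-x',z]=[f(x)-f(x'),f(z)]=0$ for every $z$ (first-slot additivity), whence $z=x-x'$ forces $x=x'$, so $f$ is injective. The same mechanism yields the two basic identities
\[ [f(x+y)-f(x)-f(y),\,f(z)]=0 \quad\text{and}\quad [f(\lambda x)-\lambda f(x),\,f(z)]=0, \]
valid for all $x,y,z\in X$ and $\lambda\in\mathbb F$, since the left-hand sides expand, via first-slot linearity, into $[x+y,z]-[x,z]-[y,z]=0$ and $[\lambda x,z]-\lambda[x,z]=0$. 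Writing $R=f(X)$ and recalling that in a smooth space $[v,w]=0$ means $w\perp v$, both identities say that the additivity and homogeneity defects lie in the right Birkhoff--James complement $R^{\perp}=\{v: w\perp v\ \text{for all}\ w\in R\}$.

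For part (i) surjectivity finishes the argument immediately: since $R=Y$, the complement $R^{\perp}$ is trivial, because for any $v\in R^{\perp}$ we may take $w=v\in R$ and read off $[v,v]=\|v\|^2=0$. Hence $f(x+y)=f(x)+f(y)$ and $f(\lambda x)=\lambda f(x)$ for all $x,y,\lambda$. Observe that the homogeneity identity carries the \emph{untwisted} scalar $\lambda$ (not $\overline\lambda$), precisely because first-slot homogeneity reads $[\lambda x,z]=\lambda[x,z]$; thus even when $\mathbb F=\mathbb C$ the map is genuinely linear rather than conjugate linear, and the exact (not merely modular) preservation leaves no room for a phase. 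Combined with $\|f(x)\|=\|x\|$ and surjectivity this makes $f$ a surjective linear isometry.

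For part (ii) surjectivity is unavailable and cannot be recovered: a non-surjective linear isometry such as the forward shift on $\ell^p$ ($1<p<\infty$) already satisfies $[f(x),f(y)]=[x,y]$. So the defects are only known to lie in $R^{\perp}$, which may be nonzero, and completeness of $X=Y$ (together with the fact that range vectors may be re-inserted into $f$) must be brought in to force them to vanish. I would attack homogeneity first, where the situation is cleanest: for $x\neq0$ and $t>0$ the computation $[f(tx),f(x)]=[tx,x]=t\|x\|^2=\|f(tx)\|\,\|f(x)\|$ is an \emph{equality} in the Cauchy--Schwarz inequality, so by smoothness $f(tx)$ and $tf(x)$ are normed by the same support functional $\phi$, with $\phi(f(tx))=\phi(tf(x))=t\|x\|$; negative and (for $\mathbb F=\mathbb C$) complex scalars are handled the same way in modulus. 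One then wants to collapse this common-functional information to $f(tx)=tf(x)$. For additivity the companion identity is $[f(x)+f(y),\,f(x+y)]=\|x+y\|^2$, which already gives $\|f(x)+f(y)\|\ge\|x+y\|$ and, in the equality regime, a shared support functional for $f(x)+f(y)$ and $f(x+y)$; the aim is to upgrade this to $f(x)+f(y)=f(x+y)$. Once $f$ is additive and homogeneous it is a norm-preserving linear map, i.e.\ a linear isometry, and the proof is done.

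The hard part is exactly this last upgrade, and it is the main obstacle. The semi-inner product is linear only in its first slot, so a defect vector can never be placed in a \emph{controlled} second slot; in particular there is no direct handle on $\|g\|=[g,g]^{1/2}$ for $g=f(x+y)-f(x)-f(y)$, and membership $g\in R^{\perp}$ does not by itself give $g=0$ since $R^{\perp}$ need not be trivial. Equivalently, knowing that $f(tx)$ and $tf(x)$ (respectively $f(x+y)$ and $f(x)+f(y)$) share a support functional pins them down only up to the face of the unit sphere exposed by that functional, and in a merely smooth, non-strictly-convex space this face may contain segments; for additivity there is the extra difficulty that the support functionals of $R$ need not be norming, so even the reverse inequality $\|f(x)+f(y)\|\le\|x+y\|$ is not free. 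This is precisely where completeness of the smooth Banach space $X=Y$ must do genuine work — collapsing the exposed face onto a single ray and thereby killing the defect — and isolating that rigidity is the crux. Part (i) sidesteps the entire difficulty through surjectivity, which is why (i) is elementary while (ii) is not.
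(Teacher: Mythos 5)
Your part (i) is correct and coincides with the paper's argument: expanding via first-slot linearity gives $[f(\lambda x+\mu y)-\lambda f(x)-\mu f(y),f(z)]=0$ for all $z$, surjectivity lets you take $f(z)$ equal to the defect itself, and $[v,v]=0$ forces $v=0$; together with $\|f(x)\|=\|x\|$ this yields a surjective linear isometry. One small caveat: (i) does not assume smoothness, so the Birkhoff--James ``complement'' framing is not available there; but your actual deduction only uses $[v,v]=0\Rightarrow v=0$, which is a pure semi-inner-product property, so this is a matter of phrasing, not substance. Your shift-operator example showing that surjectivity cannot be recovered in (ii) is also correct and pertinent.

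Part (ii), however, contains a genuine gap, which you yourself acknowledge: you reduce everything to showing that the defect $u=f(\lambda x+\mu y)-\lambda f(x)-\mu f(y)$, known only to satisfy $[u,f(z)]=0$ for all $z$, must vanish, and you correctly diagnose that your Cauchy--Schwarz-equality/shared-support-functional approach only pins $f(tx)$, resp.\ $f(x)+f(y)$, down to an exposed face that smoothness alone cannot collapse --- but you never produce the mechanism that kills $u$. ``Completeness must do genuine work'' is a diagnosis, not a proof. The idea you are missing is to pass to the dual and invoke the Bishop--Phelps theorem. Suppose $u\neq0$ and normalize $\|u\|=1$. Smoothness turns $[u,f(z)]=0$ into $\phi_{f(z)}(u)=0$, and then
\begin{equation*}
\|\phi_u+\xi\phi_{f(z)}\|\geq|\phi_u(u)+\xi\phi_{f(z)}(u)|=|\phi_u(u)|=1=\|\phi_u\|
\end{equation*}
shows $\phi_u\perp\xi\phi_{f(z)}$ in $X^*$ for all $z\in X$, $\xi\in\mathbb{R}$. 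On the other hand, the hypothesis gives the transfer identity $\phi_{f(z)}\circ f=\phi_z$ for all $z$, and Bishop--Phelps (this is exactly where completeness of the Banach space enters) says the norm-attaining functionals are dense in $X^*$; by smoothness every norm-attaining functional is a real multiple of some support functional $\phi_z$, and combining this with the transfer identity the paper concludes that $\{\xi\phi_{f(z)}:z\in X,\ \xi\in\mathbb{R}\}$ is norm dense in $X^*$. Since Birkhoff--James orthogonality survives norm limits in its second argument, $\phi_u\perp\psi$ for every $\psi\in X^*$; taking $\psi=\phi_u$ with scalar $-1$ gives $0=\|\phi_u-\phi_u\|\geq\|\phi_u\|=1$, a contradiction. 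This dual density argument is precisely the rigidity you were unable to isolate; without it (or some substitute for it), your part (ii) is an outline of the difficulty rather than a proof.
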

\begin{proof}
(i). From
\begin{align*}
[f(\lambda x+\mu y),f(z)]&=[\lambda x+\mu y,z]=\lambda[x,z]+\mu[y,z]\\
&=\lambda[f(x),f(z)]+\mu[f(y),f(z)]=[\lambda f(x)+\mu f(y),f(z)]
\end{align*}
we conclude
\begin{equation}\label{orth}
[f(\lambda x+\mu y)-\lambda f(x)-\mu f(y),f(z)]=0
\end{equation}
for all $x,y,z\in X$ and all $\lambda, \mu\in \mathbb{F}$. Since $f$ is surjective, linearity of $f$ follows.

(ii). The proof is by contradiction. Let us denote $u=f(\lambda x+\mu y)-\lambda f(x)-\mu f(y)$ and suppose that $u\ne 0$. From (\ref{orth}) we get $f(z)\perp u$ for all $z\in X$ and because $X$ is smooth this is equivalent to $\phi_{f(z)}(u)=0$.  Because of the homogeneity  of orthogonality relation we may and do assume that $\|u\|=1$. From
$$\|\phi_u+\xi\phi_{f(z)}\|\geq|\phi_u(u)+\xi\phi_{f(z)}(u)|=|\phi_u(u)|=1=\|\phi_u\|$$
for all $\xi\in\mathbb{R}$ we conclude $\phi_u\perp\phi_{f(z)}$ for all $z\in X$. 
Homogeneity of Birkhoff-James orthogonality implies $\phi_u\perp\xi\phi_{f(z)}$ for all $z\in X$, $\xi \in \mathbb{R}$.
	Furthermore, 
$$\|f(z)\|\phi_{f(z)}(f(w))=[f(w),f(z)]=[w,z]=\|z\|\phi_z(w)$$
shows 
$$\phi_{f(z)}\circ f=\phi_z,\quad z\in X.$$
By the Bishop--Phelps theorem (see \cite{Bishop-Phelps} or a recent survey \cite{Aron-Lomonosov}), for given $\psi \in X^*$ and $\varepsilon > 0$ there exists $\theta \in X^*$, $\Vert \theta \Vert = \Vert \psi \Vert$ and $\Vert \psi - \theta \Vert < \varepsilon$, such that there exists $z \in S_X$ satisfying $\theta(z)=\Vert \theta \Vert$.
Then $\pm\frac{1}{\|\theta\|}\theta$ is the support functional at $z\in S_X$. Thus $\theta=\pm\|\theta\|\phi_z\in\{\xi\phi_z: z\in X, \xi\in\mathbb{R}\}$. Hence $X^*$ is contained in the norm closure of $\{\xi\phi_z: z\in X, \xi\in\mathbb{R}\}$. Since the reverse inclusion is trivial we conclude that $X^*$ is equal to the norm closure of $\{\xi\phi_z: z\in X, \xi\in\mathbb{R}\}$.
Then from 
$$\{\xi\phi_{f(z)}: z\in X, \xi\in\mathbb{R}\}\supseteq \{\xi\phi_{f(z)}\circ f: z\in X, \xi\in\mathbb{R}\}$$
and $\phi_{f(z)}\circ f=\phi_z$ for all $z\in X$ we conclude that $X^*$ is equal to the norm closure of $\{\xi\phi_{f(z)}: z\in X, \xi\in\mathbb{R}\}$. Then $\phi_u\perp \xi\phi_{f(z)}$ for all $z\in X$ and $\xi \in \mathbb{R}$ implies $\phi_u=0$. This shows that our assumption $u\ne 0$ is false and $f$ must be linear. This completes the proof.
\end{proof}

\begin{corollary}
Let $X$ and $Y$ be normed spaces, $X$ smooth and $f:X\to Y$ a mapping. If $f$ is surjective or $X=Y$ then the following assertions are equivalent:
\begin{itemize}
\item[(i)] $[f(x),f(y)]=[x,y]$, $x,y\in X$.
\item[(ii)] $f$ is a linear isometry.
\end{itemize}
\end{corollary}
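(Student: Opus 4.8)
The plan is to prove the two implications separately, reusing Proposition~\ref{isometry} for one direction and supplying a short direct argument for the other. For (i)~$\Rightarrow$~(ii) there is nothing new to do: this is exactly Proposition~\ref{isometry}. If $f$ is surjective we quote part~(i), and if $X=Y$ we quote part~(ii). Hence the only implication requiring a proof is (ii)~$\Rightarrow$~(i).

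To prove (ii)~$\Rightarrow$~(i), I would assume $f$ is a linear isometry and show that it preserves the semi-inner products. Fix $y\neq0$ and let $\phi_{f(y)}$ be the support functional at $f(y)$ in $Y$ defining the semi-inner product there, so that $[f(x),f(y)]=\|f(y)\|\,\phi_{f(y)}(f(x))$. The key step is to observe that $\phi_{f(y)}\circ f$ is a support functional at $y$ in $X$: since $f$ is a linear isometry, $(\phi_{f(y)}\circ f)(y)=\phi_{f(y)}(f(y))=\|f(y)\|=\|y\|$, while $|(\phi_{f(y)}\circ f)(x)|=|\phi_{f(y)}(f(x))|\leq\|f(x)\|=\|x\|$ forces $\|\phi_{f(y)}\circ f\|=1$. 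Because $X$ is smooth, the support functional at $y$ is unique, so $\phi_{f(y)}\circ f=\phi_y$. Consequently $[f(x),f(y)]=\|f(y)\|\,\phi_{f(y)}(f(x))=\|y\|\,\phi_y(x)=[x,y]$ for all $x\in X$, and the case $y=0$ is trivial. This establishes (i).

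The computation in the preceding paragraph is essentially the one already recorded in the introduction for (anti)linear isometries, so I expect no real obstacle here; the one point worth flagging is that it uses only the smoothness of $X$. Indeed, smoothness of $X$ alone pins down $\phi_{f(y)}\circ f$ as $\phi_y$ no matter which support functional at $f(y)$ is selected, and regardless of whether $Y$ is smooth, so the identity $[f(x),f(y)]=[x,y]$ is forced. The genuine work of the corollary is thus entirely contained in the forward direction, and in the case $X=Y$ it rests on the Bishop--Phelps argument already carried out in Proposition~\ref{isometry}(ii).
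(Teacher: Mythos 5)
Your proof is correct, and while the direction (i) $\Rightarrow$ (ii) coincides with the paper's (both just invoke Proposition \ref{isometry}), your argument for (ii) $\Rightarrow$ (i) takes a genuinely different route. The paper uses surjectivity: it pulls arbitrary $u,v\in Y$ back to $x,y\in X$, notes that $\frac{1}{t}(\|u+tv\|-\|u\|)=\frac{1}{t}(\|x+ty\|-\|x\|)$, concludes that $Y$ is smooth, and then obtains $[f(x),f(y)]=[x,y]$ from the Gateaux-derivative description (\ref{smooth}) of the (now unique) s.i.p.\ on $Y$, as in the introduction. You instead exploit the fact that \emph{any} compatible s.i.p.\ on $Y$ is of the form $[v,w]=\|w\|\,\phi_w(v)$ with $\phi_w:=[\,\cdot\,,w]/\|w\|$ a support functional at $w$ (this identification, which you use implicitly, follows at once from linearity in the first variable, the Cauchy--Schwarz property, and $[w,w]=\|w\|^2$), and then show that $\phi_{f(y)}\circ f$ is a support functional at $y$, which smoothness of $X$ forces to equal $\phi_y$. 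This buys real generality: your argument needs neither surjectivity of $f$, nor $X=Y$, nor smoothness of $Y$, so it establishes (ii) $\Rightarrow$ (i) for an arbitrary linear isometry into an arbitrary normed space equipped with any compatible s.i.p., making clear that the corollary's standing hypothesis is only needed for (i) $\Rightarrow$ (ii); it also handles the complex case without the real-part bookkeeping that the derivative formula (\ref{smooth}) requires. What the paper's route buys instead is the byproduct that $Y$ is itself smooth when the isometry is surjective, a conclusion your argument does not (and without surjectivity cannot) deliver. One blemish you inherit from the paper rather than introduce: in the case $X=Y$ without surjectivity, Proposition \ref{isometry}(ii) is stated for smooth \emph{Banach} spaces, while the corollary assumes only a smooth normed space.
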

\begin{proof}
That (i)$\Rightarrow$(ii) follows by Proposition \ref{isometry}. Let us prove (ii)$\Rightarrow$(i). Take arbitrary $u,v\in Y$ and find $x,y\in X$ such that $u=f(x)$ and $v=f(y)$. Then from
$$\frac{1}{t}(\|u+tv\|-\|u\|)=\frac{1}{t}(\|f(x)+tf(y)\|-\|f(x)\|)=\frac{1}{t}(\|x+ty\|-\|x\|)$$ 
it follows that $Y$ is also smooth and $[f(x),f(y)]=[x,y]$.
\end{proof}

\end{document}